\newtheorem{ex}{Example}[section]
\newtheorem{prop}{Proposition}[section]
\newtheorem{twr}{Theorem}[section]
\newcommand{\bex}{\begin{ex}}
\newcommand{\eex}{\end{ex}}
\newcommand{\be}{\begin{equation}}
\newcommand{\ee}{\end{equation}}
\newcommand{\bt}{\begin{twr}}
\newcommand{\et}{\end{twr}}
\newcommand{\bp}{\begin{prop}}
\newcommand{\ep}{\end{prop}}
\newcommand{\R}{\mathbb{R}}
\newcommand{\N}{\mathbb{N}}
\title{Structure of the solution set to differential inclusions with impulses at variable times} 
\author{Agata Grudzka and Sebastian Ruszkowski
\\
\small Faculty of Mathematics and Computer Science,
Nicolaus Copernicus University,
\\
\small Chopina 12/18, 87--100 Toru\'{n}, Poland
\\
\small E-mails: agata33@mat.uni.torun.pl, sebrus@mat.uni.torun.pl}
\date{}
\begin{document}

\maketitle

{\bf Abstract.}
A topological structure of the solution set to differential inclusions with impulses at variable times is investigated. In order to do that an appropriate Banach space is defined. It is shown that the solution set is an $R_{\delta}$-set. Results are new also in the case of~differential equations with impulses at variable times.

{\bf Mathematics Subject Classification (2010):} Primary 34A37; Secondary 34A60, 34K45.

{\bf Keywords and phrases:} solution set, impulsive differential inclusions, variable times, $R_\delta$-set, topological structure.

\section{Introduction} \label{section:introduction}
Impulsive differential equations and inclusions have a~lot of applications in~the~various fields. The moments of impulses can be chosen in various ways: randomly, fixed beforehand, determined by the state of a system. The problems with fixed time of impulses were recently investigated \cite{BeRu, CarRub, DjebaliGor, DjebaliGorOua, GabGru, ObuYao}. The problems with impulses with variable times bring much more difficulties and up to now there were only existence theorems \cite{BeBeOu,BeOu,BeOu2}. Our results develop this research area and show that the solution set is an $R_{\delta}$-set.

There are many motivations to study the structure of solution sets of~differential equations and inclusions. One of them is considering the Poincar\'{e} translation operator and discussing the problem of the existence of periodic solutions \cite{KryPla,CS,HX}.

We have to have the space of functions which contains solutions of~given problem to study the structure of solution set. Obviously standard Banach space with norm $sup$ is insuficient to impulsive problem with the times of~jumps that depend on the state. B-topology on spaces of solutions of impulsive differential inclusions is introduced in \cite{AK}, however, it is only Hausdorff topology. We use this concept to create Banach space that have the same topology on common functions, and is sufficient to the considered problem.

In the Section \ref{section:preliminaries} we describe the problem and recall useful theorems. In~the~main Section \ref{section:main} we present the main results of the paper. The main idea is included in Theorem \ref{twr:dokladniejedenskokINKLUZJA} in which we show that the solution set for the problem with exactly one jump is an $R_{\delta}$-set. In the Theorem \ref{twr:drugieskonWym} we are using the result from previous Theorem proving by induction analogical statement for any fixed number of jumps. We also provide the reader with a transparent example.

\section{Preliminaries} \label{section:preliminaries}

\par The problem we deal with is
\begin{equation}\label{eq:zagadnienieINKLUZJA}
\left\{ \begin{array}{ll}
    \dot{y}(t)\in{F(t,y(t))}, & \hbox{for $t\in[0,a]$, $t\neq{\tau_j(y(t))},$ $j=1,\dots,m,$} \\
    y(0)=y_0, \\
    y(t^{+})=y(t)+I_j(y{(t)}), & \hbox{for $t={\tau_j}(y(t))$, $j=1,\dots,m,$}\\
\end{array}
\right.
\end{equation} 
where $F:[0,a]\times\R^N\multimap\R^N$, $I_j:\R^N\to\R^N,$ $j=1,\dots,m,$ are given impulse functions, $\tau_j\in{C^1(\R^N,\R)}$ with $0<\tau_j(y)<a$, and $t_y=\{t~~|~~t=\tau_k(y(t))\}$. 
The hypersurface $t-\tau_j(y)=0$ is called the $j$-th pulse hypersurface and we denote it by $\Sigma_j$.
If for each $j=1,\dots,m,$ $\tau_j$ is a different constant function, then impulses are in the fixed times.

Our goal is to find the structure of the solution set of the previous problem, but to do that we need a space of functions with $m$ jumps. We introduce considering space as $CJ_m([0,a]):=C([0,a])\times(\R\times\R^N)^m$ with following interpretation:
the element $(\varphi,(l_j,v_j)_{j=1}^m),$ where $l_j\in[0,a]$ we will interpret as the function with $m$ jumps in the times $j_k$ defined as follows:
$$\hat{\varphi}(t):=\left\{
\begin{array}{ll}
\varphi(t), &  0\leq{t}\leq l_{\sigma(1)}, \\
\varphi(t)+\sum\limits_{i=1}^{j}v_{\sigma(i)}, &  l_{\sigma(j)}<t\leq l_{\sigma(j+1)},\\
\varphi(t)+\sum\limits_{i=1}^{m}v_{\sigma(i)}, &  l_{\sigma(m)}<t\leq{a},
\end{array}
\right.$$
where $\sigma$ is a permutation of $\{1, 2, \ldots, m\}$ such that $l_{\sigma(i)}\leq l_{\sigma(i+1)}\mbox{.}$

There is a mutual correspondence between the functions on interval $[0,a]$ with $m$ jumps and the sets  $\{(\varphi,(l_j,v_j)_{j=1}^m)\in CJ_m([0,a]) \; | \; l_j<l_{j+1}\}$, with $\zeta \mapsto (\check{\zeta},(l_j,I_j(\check{\zeta}(l_j)))_{j=1}^m)$, where the function $\check{\zeta}$ is $\zeta$ with reduced jumps, $l_j$ is $j$-th time of jump and the function $I_j$ is an impulse functions.

The space $CJ_m([0,a])$ with the norm
$$\|(\varphi,(l_j,v_j)_{j=1}^m)\|:=\sup_{t\in[0,a]}\|\varphi(t)\|+\sum_{j=1}^{m}(|l_j|+\|v_j\|)$$
is a Banach space.

We will find out that the wanted structure is $R_{\delta}$-type. In order to show that we will use the following well-known theorems:

\begin{twr}[see \cite{Hyman}] \label{twr:Hyman}
Let $X$ be an absolute neighbourhood retract and \mbox{$A\subset{X}$}  be a compact nonempty subset.\par
Then the following statements are equivalent:
\begin{enumerate}
    \item [(a)] $A$ is an $R_\delta$-set,
\item [(b)] for every $\epsilon>0$ the set $A$ is contractible in $O_{\epsilon}(A)=\{x\in{X} ~|~ dist(x,A)<{\epsilon}\}$,
\item [(c)] $A$ is an intersection of a decreasing sequence $\{A_n\}$ of compact contractible spaces,
\item [(d)] $A$ is an intersection of a decreasing sequence $\{A_n\}$ of closed contractible spaces, such that $\beta(A_n)\to 0$, where $\beta$ is the Hausdorff measure of noncompactness.
\end{enumerate} 
\end{twr}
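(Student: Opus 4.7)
The plan is to establish the equivalences via the cycle $(a) \Rightarrow (c) \Rightarrow (d) \Rightarrow (b) \Rightarrow (a)$. The first three arrows are essentially formal, while the last carries the analytic weight and is where the ANR hypothesis on $X$ is actually used.

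$(a) \Rightarrow (c)$ is immediate from the definition of an $R_\delta$-set, since every absolute retract is contractible. $(c) \Rightarrow (d)$ is equally direct: compact sets are closed and have vanishing Hausdorff measure of noncompactness, so the very sequence witnessing (c) witnesses (d). For $(d) \Rightarrow (b)$ I would first deduce that $A = \bigcap_n A_n$ is nonempty and compact and that $A_n$ converges to $A$ in the Hausdorff metric; both follow from a standard Kuratowski-type argument driven by $\beta(A_n) \to 0$ combined with monotonicity of the $A_n$. Given $\epsilon > 0$, pick $n$ with $A_n \subset O_\epsilon(A)$; restricting a contraction of $A_n$ to $A \subset A_n$ yields a null-homotopy of the inclusion $A \hookrightarrow O_\epsilon(A)$, which is exactly the contractibility required by (b).

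The substantive step is $(b) \Rightarrow (a)$. From a fixed sequence $\epsilon_n \searrow 0$ and contractions $h_n : A \times [0,1] \to O_{\epsilon_n}(A)$, the task is to produce a decreasing sequence of compact absolute retracts $B_n$ with $A \subset B_n \subset O_{\epsilon_n}(A)$ and $\bigcap_n B_n = A$. My plan is to exploit that $X$ is an ANR in two ways: first, to extend each $h_n$, via the homotopy extension theorem, to a homotopy defined on a small closed neighborhood $W_n$ of $A$; second, to thicken the resulting image inside $O_{\epsilon_n}(A)$ to a compact ANR neighborhood $B_n$ of $A$ that is contractible in itself, which is then automatically an absolute retract by Borsuk's theorem. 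The main obstacle is not the production of an individual $B_n$ but arranging the whole family to be decreasing while preserving the AR property, since intersections of absolute retracts need not be absolute retracts. To handle this I would work inductively inside a fixed ambient compact AR neighborhood $B_1$ of $A$, using that inside the AR $B_1$ the set $A$ admits arbitrarily small compact AR neighborhoods, and build $B_{n+1}$ directly inside $B_n \cap O_{\epsilon_{n+1}}(A)$. This inductive upgrade from qualitative $\epsilon$-contractibility to an honest decreasing chain of absolute retracts is the technical heart of Hyman's argument and is precisely where the ANR hypothesis is indispensable.
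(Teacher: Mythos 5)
The paper itself offers no proof of Theorem \ref{twr:Hyman}: it is quoted as a known result with the reference \cite{Hyman}, so there is nothing in the text to compare your route against. Your cycle $(a)\Rightarrow(c)\Rightarrow(d)\Rightarrow(b)\Rightarrow(a)$ is the natural one, and the three easy arrows are fine: $(a)\Rightarrow(c)$ because compact absolute retracts are contractible, $(c)\Rightarrow(d)$ trivially, and $(d)\Rightarrow(b)$ by the Kuratowski-type argument showing $A_n\subset O_{\epsilon}(A)$ for large $n$ and then restricting a contraction of $A_n$ to $A$ (note this step quietly uses completeness of the ambient space to extract limits from sets with small measure of noncompactness; this is harmless here, since in the paper $X=CJ_1([0,a])$ is a Banach space, but it is worth saying).

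The genuine gap is in $(b)\Rightarrow(a)$, and it sits exactly at the two assertions you leave unproved. First, extending the contraction $h_n$ of $A$ by Borsuk's homotopy extension theorem to a closed neighbourhood $W_n$ produces a homotopy $H$ with $H_0=\mathrm{id}_{W_n}$ whose terminal map $H_1$ is constant only on $A$, not on $W_n$; so at best you learn that a small neighbourhood of $A$ is contractible \emph{in} a slightly larger set, and the next step, ``thicken the image to a compact ANR neighbourhood of $A$ that is contractible in itself,'' is precisely the statement that needs proving --- Borsuk's ``contractible ANR $\Rightarrow$ AR'' cannot be invoked before contractibility of that neighbourhood is established, and nothing you have written yields it. Second, your inductive scheme rests on the claim that inside a compact AR the set $A$ admits arbitrarily small compact AR \emph{neighbourhoods}; this is stronger than the theorem itself (it would exhibit $A$ as an intersection of a decreasing sequence of AR neighbourhoods) and is a cellularity-type property that is known to be strictly stronger than conditions $(a)$--$(d)$: for instance the Whitehead continuum in $\R^3$ satisfies $(b)$, hence is an $R_\delta$-set, yet it is cell-like without being cellular, i.e.\ it is not an intersection of nested cell neighbourhoods, so the existence of small contractible neighbourhoods is exactly the kind of statement one may not assume. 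Hyman's actual construction avoids neighbourhoods altogether: one embeds the situation (Arens--Eells into a normed space, or into the Hilbert cube) and manufactures cone-like, mapping-cylinder-type compacta over the contraction homotopies; these contain $A$, lie in $O_{\epsilon_n}(A)$, are absolute retracts by construction, and are made decreasing by refining the homotopies rather than by shrinking neighbourhoods of $A$. As written, the only implication carrying real content is assumed rather than proved.
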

 \begin{twr} {\em(Convergence theorem) (see \cite{AE}\label{twr:convergenceinkluzje})}
Let $E$ and $E'$ be Banach spaces, $(T,\Omega,\mu)$ be a measurable space, and the multivalued map $F\! : \! T\times E\multimap E'$ has closed and convex values and for a.e. $t\in T$ the map $F(t,\cdot)\! : \! E\multimap E'$ is uhc. Let $(u_n\! :\! T\to E)$ be a sequence of functions such that $u_n\to u$ in $L^p(T,E)$ and let sequence $(w_n)\subset L^p(T,E')$, $1\leq p<\infty$ be such that $w_n\rightharpoonup{w}$ in $L^p(T,E')$. If for a.e. $t\in T$ and for arbitrary $\varepsilon>0,$ there exists $N\in\N$ such that
$$w_n(t)\in \mbox{cl}\,\mbox{conv}\, B(F(t,B(u_n(t),\varepsilon)),\varepsilon)$$ for $n>N$, then $w(t)\in F(t,u(t))$ for a.e. $t\in T$.
\end{twr}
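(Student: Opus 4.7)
The plan is to reduce the weak convergence assumption to a pointwise statement via Mazur's lemma and then exploit the upper hemicontinuity of $F(t,\cdot)$ together with its closed, convex values through the support-function characterization.

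First, I would extract subsequences. Since $u_n\to u$ in $L^p(T,E)$, pass to a subsequence (still denoted $u_n$) with $u_n(t)\to u(t)$ for $\mu$-a.e.\ $t$. Since $w_n\rightharpoonup w$ in $L^p(T,E')$, Mazur's lemma furnishes finite convex combinations $\tilde w_n=\sum_{k=n}^{N_n}\lambda_k^n w_k$, $\lambda_k^n\ge 0$, $\sum_k\lambda_k^n=1$, such that $\tilde w_n\to w$ strongly in $L^p(T,E')$; a further subsequence gives $\tilde w_n(t)\to w(t)$ for a.e.\ $t$.

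Fix $t$ outside the relevant null set and let $\varepsilon>0$. The convergence $u_n(t)\to u(t)$ forces $B(u_n(t),\varepsilon)\subset B(u(t),2\varepsilon)$ for all $n$ large, and the hypothesis then yields
$$w_n(t)\in K_\varepsilon(t):=\mathrm{cl}\,\mathrm{conv}\,B\bigl(F(t,B(u(t),2\varepsilon)),\varepsilon\bigr)$$
for every sufficiently large $n$. Since $K_\varepsilon(t)$ is closed and convex, the combinations $\tilde w_n(t)$ eventually lie in $K_\varepsilon(t)$, hence $w(t)\in K_\varepsilon(t)$. For any $p\in (E')^{\ast}$ the support functional satisfies
$$\sigma(K_\varepsilon(t),p)\le\sigma\bigl(F(t,B(u(t),2\varepsilon)),p\bigr)+\varepsilon\|p\|,$$
and the uhc of $F(t,\cdot)$ at $u(t)$ is by definition the upper semicontinuity at $u(t)$ of $x\mapsto\sigma(F(t,x),p)$. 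Letting $\varepsilon\downarrow 0$ thus gives $\langle p,w(t)\rangle\le\sigma(F(t,u(t)),p)$ for every $p\in(E')^{\ast}$, and the Hahn--Banach separation theorem combined with closedness and convexity of $F(t,u(t))$ forces $w(t)\in F(t,u(t))$.

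The delicate point is the correct ordering of the two limits: one must first push $n\to\infty$ for fixed $\varepsilon$, using convexity and closedness of $K_\varepsilon(t)$ to absorb the Mazur combinations and the Mazur limit, and only afterwards send $\varepsilon\downarrow 0$, at which stage the uhc hypothesis is invoked through $\sigma(F(t,\cdot),p)$. A careful bookkeeping of the exceptional null sets (where $F(t,\cdot)$ fails to be uhc, where $u_n(t)\not\to u(t)$, or where $\tilde w_n(t)\not\to w(t)$) is needed so that this pointwise argument applies at a.e.\ $t\in T$.
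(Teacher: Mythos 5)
This statement is not proved in the paper at all: it is recalled as a known result and attributed to Aubin--Ekeland \cite{AE}, so there is no in-paper argument to compare with. Your proposal is, in substance, the classical proof of that convergence theorem: pass to a.e.\ convergent (sub)sequences, use Mazur's lemma to turn weak $L^p$ convergence of $(w_n)$ into a.e.\ convergence of convex combinations, trap $w(t)$ in the closed convex set $K_\varepsilon(t)=\mathrm{cl}\,\mathrm{conv}\,B(F(t,B(u(t),2\varepsilon)),\varepsilon)$, and then let $\varepsilon\downarrow 0$ using the support-function formulation of upper hemicontinuity together with Hahn--Banach separation against the closed convex set $F(t,u(t))$. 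The order of limits you insist on is the right one, and the estimate $\sigma(K_\varepsilon(t),p)\le\sup_{x\in B(u(t),2\varepsilon)}\sigma(F(t,x),p)+\varepsilon\|p\|$ combined with upper semicontinuity of $x\mapsto\sigma(F(t,x),p)$ does close the argument.

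One indexing point needs to be stated more carefully: you extract an a.e.\ convergent subsequence of $(u_n)$ but then apply Mazur's lemma to combinations $\tilde w_n=\sum_{k=n}^{N_n}\lambda^n_k w_k$ drawn from the \emph{original} sequence $(w_k)$. For a fixed good $t$ and $\varepsilon>0$ you only control $\|u_k(t)-u(t)\|$ for indices $k$ in the chosen subsequence, so constituents $w_k(t)$ with $k$ outside it need not lie in $K_\varepsilon(t)$, and the convexity argument would not apply to $\tilde w_n(t)$. The fix is immediate and standard: since weak convergence is inherited by subsequences, first fix the subsequence along which $u_{n_j}(t)\to u(t)$ a.e., apply Mazur's lemma to $(w_{n_j})$, and pass to a further subsequence for a.e.\ convergence of the combinations; with that bookkeeping (and the countable choice $\varepsilon=1/j$ so that all exceptional null sets can be collected once), your proof is complete and agrees with the argument in \cite{AE}.
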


We recall Arzela- Ascoli Theorem:
\begin{twr}\label{twr:Arzela-Ascoliklasyczne}
If the family $\mathcal{F}\subset{C([a,b],\R^N)}$ of continuous functions is equicontinuous and uniformly bounded, then  there exists a subsequence that converges uniformly. 
\end{twr}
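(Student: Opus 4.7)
The plan is to combine Cantor's diagonal procedure with the equicontinuity assumption to produce a uniformly convergent subsequence. More precisely, I would first choose a countable dense subset $D=\{q_1,q_2,\dots\}$ of $[a,b]$, for instance the rationals intersected with $[a,b]$, then extract a subsequence that converges pointwise on $D$, and finally use equicontinuity to promote that pointwise convergence on $D$ to uniform convergence on all of $[a,b]$.

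For the diagonal step, writing $\mathcal{F}=\{f_n\}_{n\in\N}$ (restricting to a countable subfamily is harmless since the conclusion only asks for a subsequence), the uniform boundedness hypothesis places $\{f_n(q_1)\}$ inside a compact ball of $\R^N$, so the Bolzano--Weierstrass theorem yields a subsequence $\{f_{n_k^{(1)}}\}$ convergent at $q_1$. Inductively one extracts $\{f_{n_k^{(j+1)}}\}$ from $\{f_{n_k^{(j)}}\}$ convergent also at $q_{j+1}$. The diagonal sequence $g_k:=f_{n_k^{(k)}}$ then converges pointwise on all of $D$.

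To promote this to uniform convergence, fix $\varepsilon>0$. By equicontinuity there exists $\delta>0$ such that $\|f(t)-f(s)\|<\varepsilon/3$ for every $f\in\mathcal{F}$ whenever $|t-s|<\delta$. Cover the compact interval $[a,b]$ by finitely many open intervals of diameter less than $\delta$ and pick a point $\tilde q_i\in D$ in each, $i=1,\dots,M$. Since each of the finitely many sequences $\{g_k(\tilde q_i)\}$ converges, there is $K\in\N$ with $\|g_k(\tilde q_i)-g_l(\tilde q_i)\|<\varepsilon/3$ for all $k,l>K$ and all $i\leq M$. For an arbitrary $t\in[a,b]$, choosing $\tilde q_i$ in the same piece of the cover as $t$, the usual three-piece triangle estimate gives $\|g_k(t)-g_l(t)\|<\varepsilon$, so $\{g_k\}$ is uniformly Cauchy on $[a,b]$ and thus uniformly convergent by completeness of $C([a,b],\R^N)$.

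The only delicate point is the coordination of the three $\varepsilon/3$ terms with a single index $K$ valid for all $M$ probe points simultaneously; this hinges on the finiteness of the cover, which in turn relies on the compactness of $[a,b]$ and, crucially, on the fact that equicontinuity supplies a modulus $\delta$ that is uniform in $f\in\mathcal{F}$. Apart from arranging this interplay, the proof is a standard diagonal-plus-triangle-inequality exercise with no substantive obstacle.
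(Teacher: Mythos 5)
Your proof is correct: the paper merely recalls this classical Arzel\`a--Ascoli theorem without proving it, and your diagonal-extraction-plus-three-epsilon argument is the standard complete proof. The only cosmetic remark is that you invoke equicontinuity in its uniform form (one $\delta$ for all points of $[a,b]$); if the hypothesis is read as pointwise equicontinuity, a one-line compactness argument on $[a,b]$ upgrades it to the uniform modulus you use, so no substantive gap remains.
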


A piecewise absolutely continuous function $y \! :\! [0,a]\to{\R^N}$ is a solution of the problem with impulses (\ref{eq:zagadnienieINKLUZJA}) if:
\begin{enumerate}
\item[(a)] $y(0)=y_0,$
\item[(b)] there exists a function $f\in{L^1([0,a],\R^N)}$ such that $f(t)\in{F(t,y(t))}$ for a.e. $t\in[0,a]$ and $y(t)=y_0+\sum_{j=1}^{m}I_j(y(t_j))+\int_{0}^{t}f(s)ds,$ where $t_j=\tau_{j}(y(t_j)),$ 
\item[(c)] the function $y$ is left continuous at $t={\tau}_j(y(t))\in[0,a]$ and the limit $y(t^+)$ exists and  $y(t^+)=y(t)+I_j(y(t))$ for $t={\tau}_j(y(t)),$ $j=1,\dots,m.$
\end{enumerate} 

\section{The structure of the solution set}\label{section:main}
We assume the following conditions on multivalued perturbation $F:[0,a]\times{\R^N}\multimap{\R^N}$:

   \begin{enumerate}
    \item[$(F0)$] $F$ has compact and convex values,
    \item[$(F1)$] $F(\cdot,y):[0,a]\multimap{\R^N}$ has a measurable selection for every $y\in\R^N$,
    \item[$(F2)$] is almost uniformly with respect to $t$ H-usc, i.e. for every $y\in\R^N$ and $\varepsilon>0$ there exists $\delta>0$ such that for a.e. $t\in[0,a]$ and for all $x\in\R^N$ if $\|y-x\|<\delta$, then $\sup_{\varphi\in{F(t,x)}} d(\varphi,F(t,y))<\varepsilon,$
    \item[$(F3)$] has a sublinear growth, i.e., there exists $\alpha\in{L^1([0,a])}$ such that
 $$\sup_{\varphi\in{F(t,y)}}\|\varphi\|\leq\alpha(t)(1+\|y\|) \textrm{ for a.e. }
 t\in[0,a] \mbox{ and } y\in{\R^N}\mbox{.}$$

Moreover, we assume the following hypotheses about impulse functions:
    \item[$(H1)_m$] $I_j\in{C(\R^N,\R^N)}$, $j=1,\dots,m$,
    \item[$(H2)_m$] $\tau_j\in{C^1(\R^N,\R)},$ $j=1,\dots,m$, \\
for $j=1,\dots,m-1$ we have:
$$0<\tau_j(y)<\tau_{j+1}(y)<a,$$ 
$$\tau_j(y+I_j(y))\leq\tau_j(y)<\tau_{j+1}(y+I_j(y)),$$
$$\tau_m(y+I_m(y))\leq\tau_{m}(y),$$ and there exists a constant $M\geq{0}$ such that $\|{\tau_j}'(y)\|\leq{M}$ for all $y\in{\R^N}$, $j=1,\dots,m$,  
    \item[$(H3)_m$] there exists a constant $p>0$ such that for a.e. $t\in[0,a]$  $$\sup_{\varphi\in{F(t,y)}}{\tau}_j'(y)\cdot \varphi-1\leq{-p}<0 \mbox{ for all } y\in{\R^N}, j=1,\dots,m \mbox{.}$$ 
 \end{enumerate}

Note taht if ${\tau}'_j(y)=0$ for $j=1,\dots,m$, then the problem is reduced to a problem with a fixed impulse time.

The assumption $(F2)$ and compact values of the multivalued map $F$ implies that $F(t,\cdot)$ is usc.

\begin{twr}\label{twr:dokladniejedenskokINKLUZJA} Let the assumptions $(F0)-(F3)$ and $(H1)_1-(H3)_1$ hold. Then every solution of the problem (\ref{eq:zagadnienieINKLUZJA}), where $m=1$, meets $\Sigma_1$ exactly once and the solution set $S$ of this problem is an $R_\delta$-set in the space $CJ_1([0,a])$. 
\end{twr}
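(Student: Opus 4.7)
My plan is to split the proof in two: first the geometric assertion that every solution meets $\Sigma_1$ exactly once (existence of solutions itself being guaranteed by the results of \cite{BeBeOu,BeOu,BeOu2}), and then the $R_\delta$ structure of $S$, for which I would invoke condition (d) of Hyman's theorem (Theorem \ref{twr:Hyman}) by exhibiting $S$ as the intersection of a decreasing family of closed contractible sets $A_n\subset CJ_1([0,a])$ with $\beta(A_n)\to 0$. For the single-crossing statement I consider the auxiliary function $\phi(t):=t-\tau_1(y(t))$ attached to a solution $y$: one has $\phi(0)=-\tau_1(y_0)<0$, $\phi(a)>0$ (since $\tau_1<a$), and a.e.\ on continuity intervals $\phi'(t)=1-\tau_1'(y(t))\cdot\dot y(t)\ge p>0$ by $(H3)_1$. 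Strict monotonicity plus the intermediate value theorem give one and only one crossing $t_1$ before any impulse, and $(H2)_1$ forces $\phi(t_1^+)\ge 0$, so $\phi$ remains positive thereafter and no second meeting with $\Sigma_1$ can occur.

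Compactness of $S$ in $CJ_1([0,a])$ is routine. $(F3)$ combined with Gronwall applied to the integrated form of \eqref{eq:zagadnienieINKLUZJA} gives a uniform bound $\|y\|_\infty\le K$ on every solution; the derivatives of the ``de-jumped'' continuous parts are dominated by the integrable function $\alpha(1+K)$, and the jump data $(t_1,I_1(y(t_1)))$ lie in a fixed compact subset of $\R\times\R^N$. Arzel\`a--Ascoli (Theorem \ref{twr:Arzela-Ascoliklasyczne}), the continuity of $\tau_1$ and $I_1$ to pass to the limit in $t_1=\tau_1(y(t_1))$ and $v_1=I_1(y(t_1))$, and the Convergence Theorem (Theorem \ref{twr:convergenceinkluzje}) to propagate the differential inclusion, together show that $S$ is closed and hence compact.

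For the contractible approximations, I pick a locally Lipschitz selection $f_n\colon[0,a]\times\R^N\to\R^N$ of the fattened map with $f_n(t,y)\in\overline{\mathrm{conv}}\,F(t,B(y,1/n))+B(0,1/n)$, inheriting the sublinear growth of $F$ and, for $n$ large on the bounded region where solutions live, a reinforced transversality $\tau_1'(y)\cdot f_n(t,y)\le 1-p/2$ (using continuity of $\tau_1'$ together with $(H3)_1$). Each Lipschitz problem has a unique solution $y_n$ with a single transversal impulse. I take $A_n$ to be the set of $\zeta\in CJ_1([0,a])$ solving the fattened inclusion $\dot y\in\overline{\mathrm{conv}}\,F(t,B(y(t),1/n))+B(0,1/n)$ with one transversal crossing of $\Sigma_1$; the sequence is decreasing by construction, $\bigcap_n A_n=S$ by another application of the Convergence Theorem, and the compactness argument from the previous paragraph applied to $A_n$ makes each $A_n$ compact, so $\beta(A_n)=0\to 0$. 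Contractibility of $A_n$ to $y_n$ is witnessed by the ``gluing'' homotopy
\[
H(\zeta,s)(t)=\begin{cases}\zeta(t),& t\le sa,\\ u^{\zeta,s}(t),& t>sa,\end{cases}
\]
where $u^{\zeta,s}$ is the unique $f_n$-trajectory (with impulse along $\Sigma_1$) starting from $\zeta(sa)$ at time $sa$; this gives $H(\zeta,0)=y_n$ and $H(\zeta,1)=\zeta$, and the glued function still solves the fattened inclusion since $f_n$ is itself a selection of it.

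The main obstacle is verifying that $H$ is continuous into $CJ_1([0,a])$. Since an element of $CJ_1([0,a])$ records its impulse time as a separate coordinate $l_1$, the continuity of $H$ reduces to continuity of the impulse time of $H(\zeta,s)$ in $(\zeta,s)$, and the delicate case is when the switch time $sa$ slides past the original impulse time $l_1(\zeta)$. This is precisely where the reinforced transversality $\tau_1'(y)\cdot f_n(t,y)\le 1-p/2$ pays off: the implicit function theorem applied to the equation $t=\tau_1(y(t))$ along the Lipschitz $f_n$-flow shows that the impulse time depends Lipschitz-continuously on $(\zeta,s)$ and matches $l_1(\zeta)$ exactly on the boundary $sa=l_1(\zeta)$. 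With $A_n$ closed, contractible, decreasing and $\beta(A_n)\to 0$, condition (d) of Theorem \ref{twr:Hyman} delivers the $R_\delta$ conclusion.
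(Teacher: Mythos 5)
Your outline follows essentially the same route as the paper (transversality function $t-\tau_1(y(t))$ for uniqueness of the crossing, Lipschitz-in-$y$ selections of a decreasing convexified enlargement of $F$, the solution sets of the enlarged problems as approximating sets, a switch-time gluing homotopy, Hyman's criterion (d)), but two steps are not justified as written. First, the compactness of $A_n$: with the open-ball fattening $y\multimap \mathrm{cl\,conv}\,F\bigl(t,B(y,1/n)\bigr)+B(0,1/n)$ the right-hand side need not be usc in $y$ when $F(t,\cdot)$ is merely usc (the set $F(t,B(y,1/n))$ can expand discontinuously at the moment the open ball captures a boundary point where $F$ has a large value), and its values are not closed; hence closedness of $A_n$ does not follow from the Arzel\`a--Ascoli/Convergence-Theorem argument you invoke, because Theorem \ref{twr:convergenceinkluzje} would have to be applied to the fattened map itself, whose hypotheses you have not secured. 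This is repairable -- use closed balls $\mathrm{cl}\,B(y,1/n)$, for which $y\multimap F(t,\mathrm{cl}\,B(y,1/n))$ is usc with compact values, or do what the paper does: work with $\mathrm{cl}\,A_n$, prove that every sequence $y_n\in A_n$ has a subsequence converging in $CJ_1([0,a])$ to an element of $S$ (the paper's Step 2, including the delicate convergence of the jump times and of the post-jump pieces), and deduce $\bigcap_n \mathrm{cl}\,A_n=S$ together with $\beta(\mathrm{cl}\,A_n)\to 0$ -- but as stated the step fails, and note that Hyman (d) also needs the sets to be closed.

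Second, the continuity of $H$ does not ``reduce to continuity of the impulse time''. An element of $CJ_1([0,a])$ is a triple (de-jumped continuous part, jump time, jump vector); when the switch time slides past $l_1(\zeta)$ the post-impulse piece of $H(\zeta,s)$ is a trajectory issued from a different post-impulse state, so besides the jump time you must prove uniform convergence of the de-jumped parts (a Gronwall comparison using the integrable Lipschitz constant of $f_n$, the analogue of the paper's estimate of $\varrho_{\bar y_k,r_k}$ and $z_{\bar y_k,r_k}$) and convergence of the jump vectors via continuity of $I_1$. Moreover the implicit function theorem is not literally applicable: $f_n$ can only be measurable in $t$ (since $F$ is), so along the flow $t\mapsto\tau_1(y(t))-t$ is merely absolutely continuous; the correct tool is the a.e.\ inequality $\frac{d}{dt}\bigl(\tau_1(y(t))-t\bigr)\le -p'<0$ on the bounded region, which yields the one-sided Lipschitz estimate on the crossing time -- exactly the paper's squeeze argument leading to (\ref{eq:t}). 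With these two repairs your argument becomes a faithful variant of the paper's proof; the remaining differences (ball fattening instead of star-refinement partitions of unity, a single linear switch-time parameterization instead of the paper's two-phase one) are cosmetic.
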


\begin{proof}
To simplify notation we write $I$ and $\tau$ instead of $I_1$ and $\tau_1$. 
We will proceed in several steps.

 \vspace{0,3cm}
\underline{Step 1.}
A Lipschitz selection and the uniqueness of jump.

For each $n$ let $\{B(y,r_n(y))\}_{y\in{\R^N}}$ be an open covering (open balls, such that $r_n(y)\leq{\frac{1}{n}}\to{0}$) of the space $\R^N,$ such that for every $x\in B(y,r_n(y))$ we have

\be\label{eq:szacowanie}
\sup_{\varphi\in{F(t,x)}}d(\varphi,F(t,y))<\frac{1}{n}.\ee
There exists locally finite open point-star refinement $\mathcal{U}_n=\{U_{n,s}\}_{s\in{S}}$ of the cover
$\{B(y,r_n(y))\}_{y\in{\R^N}},$ i.e. for every  $y\in{\R^N}$ there exists $x_{y,n}\in\R^N$ such that $\mbox{st}(y,\mathcal{U}{_n})\subset{B(x_{y,n},r_n(x_{y,n}))}.$

We can choose it in a way that $\mathcal{U}_{n+1}$ is a~refinement of the cover $\mathcal{U}_{n}$. Let $\{\lambda_s\}_{s\in{S}}$ be a locally Lipschitz partition of unity subordinated to the cover $\mathcal{U}_n$ i.e. for every $s\in{S}$ the function $\lambda_s:[0,a]\to\R^N$ satisfies the locally Lipschitz condition.
For every $y_s\in\R^N$, $s\in{S}$ let a~function $q_s$ be a~measurable selection of ${F}(\cdot,y_s).$ We define 
the function $g_n:[0,a]\times{\R^N}\to{\R^N}$ in the following way
$$g_n(t,y):=\sum_{s\in{S}}\lambda_s(y)\cdot{q_s(t)}.$$

The set $S(y):=\{s\in{S}~|~{\lambda}_s(y)\neq{0}\}$ is finite. If $\lambda_s(y)>0,$ so $s\in{S(y)},$ then $y\in\mbox{supp\,}\lambda_s\subset U_{n,s}\subset{\mbox{st\,}(y,\mathcal{U}_n)}.$ There exists ${x_{y,n}}\in\R^N$ such that $\mbox{st\,}(y,\mathcal{U}{_n})\subset B(x_{y,n},r_n(x_{y,n})).$ We know that $y_s\in{\mbox{st\,}(y,\mathcal{U}_n)}$.
We get
\begin{align*}\nonumber
g_n(t,y)&=\sum_{s\in{S}}\lambda_s(y)\cdot{q_s(t)}\in\mbox{conv\,}F(t,\mbox{st\,}(y,\mathcal{U}{_n}))\\
&\subset G_n(t,y):=\mbox{cl conv\,}F(t,\mbox{st\,}(y,\mathcal{U}{_n}))\mbox{.}\\
\end{align*}
Moreover,
$${G_n(t,y)}\subset\mbox{cl conv\,}F(t,B(x_{y,n},r_n(x_{y,n}))),$$
so from the inequality $(\ref{eq:szacowanie})$  we have 
\be\label{eq:GnsiedziwOtoczceF}
G_n(t,y)\subset \mbox{ cl\,}O_{\frac{1}{n}}(F(t,x_{y,n})).\ee
We have
$$F(t,y)\subset\bigcap_{n\geq{1}}G_n(t,y).$$

From  usc (the map $F$ has compact values) we get that for every $y\in\R^N$ and for every $\varepsilon>0$ there exists $\delta>0$ such that
$$F(t,B(y,\delta))\subset{O_{\varepsilon}}(F(t,y)).$$

We have
\begin{align*}\nonumber
\bigcap_{n\geq{1}}G_n(t,y)&\subset \bigcap_{n\geq{1}}\mbox{ cl\,}O_{\frac{1}{n}}(F(t,x_{y,n}))\\
&\subset\bigcap_{n\geq{1}}\mbox{ cl\,}O_{\frac{1}{n}}\left(F\left(t,B(y,2r_n(x_{y,n}))\right)\right)\\
&\subset\bigcap_{n\geq{1}}\mbox{ cl\,}O_{\frac{1}{n}+\varepsilon}\left(F(t,y)\right)=F(t,y)\mbox{.}\\
\end{align*}
We obtain that
$$F(t,y)=\bigcap_{n\geq{1}}G_n(t,y).$$
We have
$$G_{n+1}(t,y)\subset G_n(t,y).$$

Let us introduce the Nemitski\u{i} (substitution) operator $P_{G_n} : CJ_1([0,a])\multimap L^1(J,\R^N)$
as follows
$$P_{G_n}(y):=\{\phi\in{L^1([0,a],\R^N)}~|~ \phi(t)\in{G_n(t,y(t))} \textrm{ for a.e. } t\in[0,a] \}.$$
Let $S_n$ denote the set of solutions of the problem
\begin{equation}\label{eq:zagadnieniezGnvariabletimes}
\left\{ \begin{array}{ll}
    \dot{y}(t)\in{G_n(t,y(t))}, & \hbox{for $t\in[0,a]$, $t\neq{\tau(y(t))},$} \\
    y(0)=y_0, \\
    y(t^{+})=y(t)+I(y{(t)}), & \hbox{for $t={\tau}(y(t))$. }\\
\end{array}
\right.
\end{equation} 
It is obvious that the sets $S_n$ are nonempty, because the problem
\begin{equation}\label{eq:zagadnieniegnvariabletimes}
\left\{ \begin{array}{ll}
    \dot{y}(t)=g_n(t,y(t)), & \hbox{for $t\in[0,a]$, $t\neq{\tau(y(t))},$} \\
    y(0)=y_0, \\
    y(t^{+})=y(t)+I(y{(t)}), & \hbox{for $t={\tau}(y(t))$, }\\
\end{array}
\right.
\end{equation}
for every $n\in\N$ has exactly one solution.

 \vspace{0,3cm}
\underline{Step 1a.}

We denote by $t^j_{y_n}$ the time of $j$-th jump for the function $y_n$ and if the function $y_n$ has less that $j$ jumps we take $t^j_{y_n}=a$.   
Let $y_n$ be an arbitrary solution of the system (\ref{eq:zagadnieniezGnvariabletimes}) for $0\leq t\leq t^2_{y_n}$. For $t\leq{t^1_{y_n}}$ we get the following form of the solution:
$$y_n(t)=y_0+\int_{0}^{t}\phi_n(s)ds,$$
where $\phi_n\in P_{G_n}(y_n)$. 

There exists selection $f_n$ (not necessarily measurable) of the multivalued map $F(\cdot,x_{y,n}(\cdot))$ such that for a.e.  $t$ we get $\|\phi_n(t)-f_n(t,x_{y,n}(t))\| \leq \frac{1}{n}$. 
We have
$$\|\phi_n(t)\|\leq \frac{1}{n}+\|f_n(t,x_{y,n}(t))\|.$$
From the assumption $(F3)$ we obtain
$$\|f_n(t,x_{y,n}(t))\|\leq \alpha(t)\left(1+\|x_{y,n}(t)\|\right)\leq \alpha(t)\left(1+\|y(t)\|+\frac{1}{n}\right).$$
So
$$
\|y_n(t)\|\leq \|y_0\|+\int_{0}^{t}\left(\alpha(s)\left(1+\|y_n(s)\|+\frac{1}{n}\right)+\frac{1}{n}\right)ds\mbox{.}
$$
From Gronwall inequality, we have:
\be\label{eq:wspOgr}\sup_{t\in[0,t^1_{y_{n}}]}\|y_n(t)\|\leq{\left(\|y_0\|+
\int_{0}^{a}2\alpha(s)ds+\frac{a}{n}\right)e^{\int_0^a\alpha(s)ds}}:=K.\ee
By continuity of the function $I$ there exists a constant $c>0$ such that $\|I(y_n(t^1_{y_n}))\|\leq c$ for all $n$.
Next for $t^1_{y_n}<t\leq{t^2_{y_n}}$ we obtain 
\begin{align*}\nonumber
\|y_n(t)\|&\leq\|y_0\|+\|I(y_n(t^1_{y_n}))\|+\int_{0}^{t}\|\phi_n(s)\|ds\\
&\leq\|y_0\|+c+\int_{0}^{t}\left(\alpha(s)\left(1+\frac{1}{n}+\|{y_n(s)}\|\right)+\frac{1}{n}\right)ds\mbox{.}\\
\end{align*}
Again, from Gronwall inequality we get:
\be\label{eq:wspOgrPosokou}\sup_{t\in[0,t^2_{y_n}]}\|y_n(t)\|\leq{Ce^{\int_0^t\alpha(s)ds}}<{Ce^{\int_0^a\alpha(s)ds}}=:\bar{K},\ee
where $C:=\|y_0\|+c+\int_{0}^{a}2\alpha(s)ds+\frac{a}{n}.$\\
If the solution $y_n$ does not have jumps, then of course $\sup_{t\in[0,a]}\|y_n(t)\|\leq K.$ \\

\vspace{0,3cm}
\underline{Step 1b.}

Let $\bar{y}$ be a fixed function with values in $\mbox{cl\,}B(0,\bar{K}).$ Let $\phi_n\in{G_n(t,\bar{y}(t))}$, where $t$ is such that $(H3)_1$ is satisfied. We denote $\bar{y}(t)=:y\mbox{.}$ 
From the assumptions $(H2)_1$ and $(H3)_1$ for some $v\in\mbox{cl }{B(0,1)}$ we get:
\begin{align*}\nonumber
{\tau}'(y)\cdot{\phi}_n-1
&=\left({{\tau}'(x_{y,n})+\tau}'(y)-{\tau}'(x_{y,n})\right)\cdot{\phi}_n-1\\
&={\tau}'(x_{y,n})\cdot\left(\bar{\varphi}+\frac{1}{n}v\right)-1+\left({\tau}'(y)-{\tau}'(x_{y,n})\right)\cdot{\phi}_n\\
&={\tau}'(x_{y,n})\cdot\bar{\varphi}-1+\frac{1}{n}{\tau}'(x_{y,n})\cdot{v}+({\tau}'(y)-{\tau}'(x_{y,n}))-{\phi}_n,\\
\end{align*}
where $\bar{\varphi}\in{F(t,x_{y,n})}.$
Hence
\begin{align*}\nonumber
{\tau}'(y)\cdot{\phi}_n-1&\leq{-p+\frac{\|{\tau}'(x_{y,n})\|}{n}}+\left({\tau}'(y)-{\tau}'(x_{y,n})\right)\cdot{\phi}_n\\
&\leq{-p+\frac{M}{n}}+\|{\tau}'(y)-{\tau}'(x_{y,n})\|~~\|{\phi}_n\|.\\
\end{align*}

The function ${\tau}'$ is continuous on the compact set $\mbox{ cl\,}B(0,\bar{K}+1)$, therefore it is uniformly continuous. Hence, for $y\in\mbox{ cl\,}B(0,\bar{K})$ ${\tau}'(y)-{\tau}'(x_{y,n})\to{0}$ (we have $\|x_{y,n}\|\leq{\bar{K}}+\frac{1}{n}$). Moreover, the set $\{{\phi}_n\}$ is bounded (from the sublinear growth of $F$). We have $\|{\tau}'(y)-{\tau}'(x_{y,n}))\|~~\|{\phi}_n\|\to{0}.$  
We take $N_0\in{\N}$ such that for every $n\geq{N_0}$ we have $-p+\frac{M}{n}+\|{\tau}'(y)-{\tau}'(x_{y,n})\|~~\|{\phi}_n\|<\frac{-p}{2}.$  There exists a constant $p'>0$ such that 
$${\tau}'(y)\cdot \phi_n-1<-p'\mbox{.}$$

\vspace{0,3cm}
\underline{Step 1c.}

Let us fix $y_n$, where $n>N_0,$ the solution of the problem (\ref{eq:zagadnieniezGnvariabletimes}).

We define the function $w_n:[0,a]\to\R$ by:
$$w_n(t):=\tau(y_n(t))-t.$$
The function $w_n$ has value $0$ in any time, in which the function $y_n$ has a~jump.
By the condition $(H2)_1$ we get $w_n(0)=\tau(y_0)>{0}$ and $w_n(a)=\tau(y_n(a))-a<a-a=0$.
If $w_n(t)\neq{0}$ on $[0,a]$, then there would not be any impulse effect, therefore there would not be any jump time, so $w_n$ would be continuous, which would contradict with the earlier inequalities. Hence every solution of~the~problem (\ref{eq:zagadnieniezGnvariabletimes}) has at least one jump.

Suppose that $0<t^1_{y_n}<a$ is the first time in which the solution $y_n$ hits the hypersurface $\Sigma_1$. Then
$$w_n(t^1_{y_n})=0\;\mbox{ and } w_n(t)>0,\; \mbox{ for } t\in[0,t^1_{y_n}).$$
By  assumption $(H2)_1$ we get that
$$w_n({t^1_{y_n}}^+)=\tau(y_n({t^1_{y_n}}^+))-{t^1_{y_n}}= \tau\left(y_n(t^1_{y_n})+I(y_n(t^1_{y_n}))\right)-t^1_{y_n}\leq{0}\mbox{.}$$
For a.e. $t\geq{t^1_{y_n}}$ we have  
$$w'_n(t)={\tau}'(y_n(t))\cdot{y_n'(t)}-1={\tau}'(y_n(t))\cdot{\phi}_n(t)-1<-p'<0,$$
where $\phi_n\in{P_{G_n}(y_n)}$.
The function $w_n$ in $[t^1_{y_n},a]$ is decreasing, hence $y_n$ hits the hypersurface $\Sigma_1$ exactly once and the time of this jump we denote $t_{y_n}\mbox{.}$  

\vspace{0,3cm}
\underline{Step 2.}

Now we will show that each sequence $(y_n)$, where $y_n\in{S_n},$ has a~convergent subsequence to the solution $\tilde{y}$ of the problem (\ref{eq:zagadnienieINKLUZJA}).\\

There exists exactly one jump, so from the previous estimations we have

$$\|y_n(t)\|\leq\bar{K}\mbox{.}$$
Consequently, the values of solutions of the problem (\ref{eq:zagadnienieINKLUZJA}) are contained in a ball $\mbox{cl\,}B(0,\bar{K})$, which is convex, so in particular we know that function $g_n|_{[0,a]\times\mbox{cl}\,B(0,\bar{K})}$ has integrable Lipschitz constant $\Lambda$.

For $t<t'\leq t_{y_{n}}$ we have
\begin{equation}\begin{split} \label{eq:JednakowaCG}
\|y_n(t)-y_n(t')\|&=\left\|\int_{t}^{t'}\phi_n(s)ds\right\|\leq \frac{1}{n}|t-t'|+\int_{t}^{t'}\alpha(s)\left(1+\frac{1}{n}+\|y_n(s)\|\right)ds\\
&\leq |t-t'|+(2+K)\int_{t}^{t'}\alpha(s)ds,
\end{split}\end{equation}
and for $t_{y_{n}}<t<t'$ we have
\begin{equation}\begin{split} \label{eq:JednakowaCG2}
\|y_n(t)-y_n(t')\|&=\left\|\int_{t}^{t'}\phi_n(s)ds\right\|\leq \frac{1}{n}|t-t'|+\int_{t}^{t'}\alpha(s)\left(1+\frac{1}{n}+\|y_n(s)\|\right)ds\\
&\leq |t-t'|+(2+\bar{K})\int_{t}^{t'}\alpha(s)ds.
\end{split}\end{equation}

\underline{Step 2.a} 

Let us consider convergence to the time $t_*$, where $t_*$ the limit of~a~convergent subsequence $(t_{y_{n_k}})$ of the sequence $(t_{y_n})$ (with $y_n\in{S_n}$), which exists due to the compactness of $[0,a]$. 
For every $\varepsilon>0$ there exists $N_0$ such that for $n_k>N_0$ we have $t_*-\varepsilon<t_{y_{n_k}}$.

Note that
$$\sup_{t\in[0,t_*-\varepsilon-\delta]}\int_{t}^{t+\delta}\alpha(s)ds\to{0}$$
with $\delta\to 0\mbox{.}$  

From inequality (\ref{eq:JednakowaCG}) we obtain that for all $\xi>0$ and $0<t<t_*-\varepsilon$ there exists $\delta>0$ such that for all $n_k>N_0$ and $t<t'<t+\delta\leq t_*-\varepsilon$ we have $\|y_{n_k}(t)-y_{n_k}(t')\|\leq |t'-t|+(2+K)\int_{t}^{t'}\alpha(s)ds<\xi$.
Therefore family $\{y_{n_k}\}_{n_k>N_0}$ is equicontinuous and by the inequality (\ref{eq:wspOgr}) uniformly bounded. 
By Arzela-Ascoli Theorem \ref{twr:Arzela-Ascoliklasyczne} (possibly going to the subsequences) we can assume that $y_{n_{k}}\to{y_{\varepsilon}}$ on $[0,t_*-\varepsilon]$, where $y_{\varepsilon}$ is continuous function. This can be done in such a way that for any ${\varepsilon}_1,{\varepsilon}_2>0$ such that ${\varepsilon}_1>{\varepsilon}_2$ functions $y_{{\varepsilon}_1}$, $y_{{\varepsilon}_2}$ fulfil condition $y_{\varepsilon_2}|_{[0,t_*-{\varepsilon}_1]}=y_{{\varepsilon}_1}\mbox{.}$
For ${\varepsilon}\searrow{0}$ we obtain an extension of the function $y_{\varepsilon}$, ie the function $y_*\! :\! [0,t_*) \to \R^N$, where $y_{n_k}$ converges pointwise to $y_{*}\mbox{.}$

Moreover
\begin{equation}
\label{eq:calkOgranphi_n} \|\phi_{n_k}(t)\|\leq{\alpha(t)\left(1+\|x_{y_{n_k},n_k}(t)\|\right)}+\frac{1}{n_k}\leq \alpha{(t)}(2+\bar{K})+1. \end{equation}
We know that: 
\begin{itemize}
 \item ${\phi}_{n_k}(t)\in \mbox{cl conv}\; F\left(t,\mbox{st\,}(y_{n_k}(t),\mathcal{U}_{n_k})\right)
\subset{\mbox{cl}}\,O_{\frac{1}{n_k}}\left(F\left(t,B(y_{n_k}(t),\frac{1}{n_k})\right)\right)$, by inclusion (\ref{eq:GnsiedziwOtoczceF}),
 \item $y_{n_k}(t)\to y_{\varepsilon}(t)$ a.e. on $[0,t_*-\varepsilon]$,
 \item ${\phi}_{n_k}\in{L^1([0,t_*-\varepsilon],\R^N)}$,
 \item by estimation (\ref{eq:calkOgranphi_n}) and weak compactness of the closed ball we get ${\phi}_{n_{k_l}}\rightharpoonup{\phi}$ on $[0, t_*-\varepsilon]$. 
\end{itemize}
Thus by Theorem \ref{twr:convergenceinkluzje} we obtain ${\phi}(t)\in F(t,y_{\varepsilon}(t))$ for a.e. $t\in[0,t_*-\varepsilon]$.
By analogy, we conclude that ${\phi}(t)\in F(t,y_*(t))$ a.e. on $[0,t_*)$.
By weak convergence
${\phi}_{n_{k_l}}\rightharpoonup{\phi}$ on $[0, t_*-\varepsilon]$, for $\Psi(\phi_n):=\int_{0}^{t}\phi_n(s)ds$ we have
\begin{align*}
\int_{0}^{t}\phi(s)ds&=\Psi(\phi)=\lim_{k\to\infty}\Psi(\phi_{n_k})=\lim_{k\to\infty}\int_{0}^{t}{\phi}_{n_k}(s)ds\\
&=\lim_{k\to\infty}{y_{n_k}(t)}-y_0=y_*(t)-y_0\mbox{.}\end{align*}
For an increasing sequence $(s_n)$ convergent to $t_*$ with $n<n'$ we obtain
$$\|y_*(s_n)-y_*(s_{n'})\|=\left\|\int_{s_n}^{s_{n'}}\phi(s)ds\right\|\leq \int_{s_n}^{t_*}(\alpha(s)(2+K)+1)ds\mbox{.}$$
We have convergence of the right hand side of the inequality to $0$ with $n\to \infty$,
consequently $(y_*(s_n))$ is Cauchy sequence. It is convergent ($\R^N$ is complete) and we denote its limit $y_*(s_n)\to y_1$. We define $y_*(t_*):=y_1$ and we obtain continuous extension $y_*$ on $[0,t_*]$.
We will show, that $\tau(y_*(t_*))-t_*=0$, which means that $t_*$ is time of jump for $y_*$.

Let $\epsilon>0$. The function $\alpha(s)(2+\bar{K})+1$ is integrable, so we can choose $t_{\epsilon}<t_*$ so that  $$\int_{t_{\epsilon}}^{t_*}2[\alpha(s)(2+\bar{K})+1]ds<\frac{\epsilon}{2}\mbox{.}$$
There exists $K_0\in\N$ such that for $k>K_0$ we have $\|y_*(t_{\epsilon})-y_{n_k}(t_{\epsilon})\|<\frac{\epsilon}{2}$.
We can estimate
\begin{align*}
\|y_{n_k}(t_{y_{n_k}})-y_*(t_{*})\|&\leq \left\|y_{n_k}(t_{\epsilon})+\int_{t_{\epsilon}}^{t_{y_{n_k}}}\phi_{n_k}(s)ds-y_*(t_{\epsilon})-\int_{t_{\epsilon}}^{t_*}\phi(s)ds\right\|\\  
\leq&\|y_{n_k}(t_{\epsilon})-y_*(t_{\epsilon})\|+\int_{t_{\epsilon}}^{t_{y_{n_k}}}\|\phi_{n_k}(s)\|ds+\int_{t_{\epsilon}}^{t_*}\|\phi(s)\|ds\\
\leq&\|y_{n_k}(t_{\epsilon})-y_*(t_{\epsilon})\|+\int_{t_{\epsilon}}^{t_{y_{n_k}}}\left(\alpha(s)(2+\bar{K})+1\right)ds\\
+&\int_{t_{\epsilon}}^{t_*}\left(\alpha(s)(2+\bar{K})+1\right)ds\\
\leq& \frac{\epsilon}{2}+\int_{t_{\epsilon}}^{t_*}2(\alpha(s)(2+\bar{K})+1)ds\leq \epsilon\mbox{.}
\end{align*}
For $t_{y_{n_k}}>t_*$ we get
\begin{align*}\|y_*(t_*)-y_{n_k}(t_{y_{n_k}})\|\leq \|y_*(t_*)-y_*(t_*-\varepsilon)\|&+\|y_*(t_*-\varepsilon)-y_{n_k}(t_*-\varepsilon)\|\\
&+\|y_{n_k}(t_*-\varepsilon)-y_{n_k}(t_{y_{n_k}})\|\mbox{,}
\end{align*} 
but it is easy to see that:
\begin{align*}
\|y_{n_k}(t_*-\varepsilon)-y_{n_k}(t_{y_{n_k}})\|&=\left\|\int_{t_*-\varepsilon}^{t_{y_{n_k}}}\phi_{n_k}(s)ds\right\|
\leq\int_{t_*-\varepsilon}^{t_{y_{n_k}}}\|\phi_{n_k}(s)\|ds\\
&\leq  \int_{t_*-\varepsilon}^{t_{y_{n_k}}} \big(\alpha(s)(2+K)+1\big)ds\\
&= (2+K)\int_{t_*-\varepsilon}^{t_{y_{n_k}}} \alpha(s)ds +t_{y_{n_k}}-t_*+\varepsilon \mbox{,}
\end{align*}
so
\begin{align*}
\|y_*(t_*)-y_{n_k}(t_{y_{n_k}})\|&\leq \|y_*(t_*)-y_*(t_*-\varepsilon)\|+\|y_*(t_*-\varepsilon)-y_{n_k}(t_*-\varepsilon)\|\\
&+(2+K)\int_{t_*-\varepsilon}^{t_{y_{n_k}}} \alpha(s)ds +t_{y_{n_k}}-t_*+\varepsilon\\
&\to_{k\to \infty}
\|y_*(t_*)-y_*(t_*-\varepsilon)\|+(2+K)\int_{t_*-\varepsilon}^{t_*} \alpha(s)ds +\varepsilon\mbox{.}
\end{align*}
From the arbirary $\epsilon$ and $\varepsilon$ we get
$\|y_*(t_*)-y_{n_k}(t_{y_{n_k}})\|\to 0$.

Summarising, we have that 
$y_{n_k}(t_{y_{n_k}})\to y_*(t_*)$.
By the continuity of $\tau$ we obtain $\tau(y_*(t_*))-t_*=\lim_{{n_k}\to\infty}(\tau(y_{n_k}(t_{y_{n_k}}))-t_{y_{n_k}})=0$ which means that $t_*$ is the time of jump for $y_*$.

 \vspace{0,3cm}
\underline{Step 2.b}

We will make the similar reasoning with the part of segment $[0,a]$ after jump.

Form inequalities (\ref{eq:wspOgrPosokou}) and (\ref{eq:JednakowaCG2}) we conclude that the family $\{y_{n_k}\}$ is equicontinuous and equibounded on $[t_*+\varepsilon,a]$. 
Therefore, from Arzeli-Ascolego \ref{twr:Arzela-Ascoliklasyczne} theorem (passing to a subsequence if it is
needed) we can assume that $y_{n_k}\to{y^{\varepsilon}}$, where $y^{\varepsilon}$ is a continuous function and we extend
it to a continuous function $y^*\! :\! (t_*,a] \to \R^N$ with $y_{n_k}$ convergent pointwise to $y^*$.

We know that:
\begin{itemize}
 \item ${\phi}_{n_k}(t)\in \mbox{cl conv}\; F\left(t,\mbox{st\,}(y_{n_k}(t),\mathcal{U}_{n_k})\right)
\subset{\mbox{cl}}\,O_{\frac{1}{n_k}}\left(F\left(t,B(y_{n_k}(t),\frac{1}{n_k})\right)\right)$.
 \item $y_{n_k}(t)\to y^{\varepsilon}(t)$ a.e. on $[t_*+\varepsilon,a]$,
 \item ${\phi}_{n_k}\in{L^1([t_*+\varepsilon,a],\R^N)}$,
 \item by estimation (\ref{eq:calkOgranphi_n}) and the weak compactness of closed ball we get ${\phi}_{n_{k_l}}\rightharpoonup{\phi}$ on $[t_*+\varepsilon,a]$. 
\end{itemize} 
Again, by theorem \ref{twr:convergenceinkluzje} we obtain ${\phi}(t)\in F(t,y^{\varepsilon}(t)),$ for a.e. $t\in{[t_*+\varepsilon,a]}$, so we get an information that ${\phi}(t)\in F(t,y^*(t))$ a.e. on $(t_*,a]$.
By weak convergence
${\phi}_{n_{k_l}}\rightharpoonup{\phi}$ on $[t_*+\varepsilon,a]$ for $\Psi(\phi_n):=\int_{t}^{a}\phi_n(s)ds$ we have
\begin{align*}\nonumber
\int_{t}^{a}\phi(s)ds&=\Psi(\phi)=\lim_{k\to\infty}\Psi(\phi_{n_k})=\lim_{k\to\infty}\int_{t}^{a}{\phi}_{n_k}(s)ds\\
&=\lim_{k\to\infty}{y_{n_k}(a)}-y_*(t)=y_*(a)-y_*(t)\mbox{.}\\
\end{align*}
For decreasing sequence $(s_n)$ convergent to $t_*$ we get for $n<n'$:
$$\|y^*(s_n)-y^*(s_{n'})\|=\left\|\int_{s_{n'}}^{s_n}\phi(s)ds\right\|\leq \int^{s_n}_{t_*}\alpha(s)(1+\bar{K})ds\mbox{.}$$
By analogy to Step 2a, we get that $(y^*(s_n))$ is the Cauchy sequence, which is convergent in Banach space, so $y^*(s_n)\to y_2$ for some $y_2\in\R^N$.

By continuity of $I$ we have that $I(y_{n_k}(t_{y_{n_k}}))\to I(y_*(t_*))$, therefore $y_2=y_*(t_*)+I(y_*(t_*))$.

We can define a function $\tilde{y} \! : \! [0,a] \to \R^N$ by concatenation $y_*$ on $[0,t_*]$ with the function $y^*$ on $(t_*,a]$.
Obviously, for $t\leq t_*$ the function $\tilde{y}$ is the solution of the problem~(\ref{eq:zagadnienieINKLUZJA}). For $t>t_*$ we have:
$$\tilde{y}(t)=\tilde{y}(t_*)+I(\tilde{y}(t_*))+\int_{t_*}^t \phi(s)ds=y_0+\int_0^{t_*}\phi(s)ds+ I(\tilde{y}(t_*))+\int_{t_*}^t \phi(s)ds\mbox{,}$$
where $\phi\in P_{G_n}(\tilde{y})$, so $\tilde{y}$ is the solution of the problem~(\ref{eq:zagadnienieINKLUZJA}) for all $t\in[0,a]$, hence $\tilde{y}\in{S}$. The function $\tilde{y}$ is
the limit of the sequence $(y_{n_k})$ in the space $CJ_1([0,a])$.
 \vspace{0,3cm}

\underline{Step 3.}
We will show, for every $n\in\N$,  the contractibility of the set $\mbox{cl}\, S_n$.

Fix $n$ such that we can define $p'$ (see Step 1.) and take $\bar{y}\in\mbox{cl}\,S_{n}$. We divide the interval $[0,1]$ into two halfs.

Let $r\in[0,\frac{1}{2}]$. We consider the problem
\begin{equation}\left\{
\begin{array}{ll}
    \dot{y}(t)=g_n(t,y(t)), & \mbox{for } t\in[a-2r(a-t_{\bar{y}}),a] \mbox{, } t\neq{\tau(y(t)}) \mbox{,}  \\
    y(t)=\bar{y}(t), & \mbox{for } t\in[0,a-2r(a-t_{\bar{y}})] \mbox{,}\\
    y(t^+)=y(t)+I({y}(t)), & \mbox{for } t=\tau(y(t)) {.}\\
\end{array}
\right.\label{eq:impulDlaselekcjigSKwym}\end{equation} 
In the previous problem we denote by $g_n$ selection of the map $G_n.$
There exists exactly one solution of this problem; we denote it by $y^{2}_{\bar{y},r}.$ Then
$y^2_{\bar{y},r}\in\mbox{cl}\,S_n.$\\
Next for $r\in(\frac{1}{2},1]$ we consider the problem
\begin{equation}\left\{
\begin{array}{ll}
    \dot{y}(t)=g_n(t,y(t))\mbox{,} & \mbox{for } t\in[t^{\bar{y},r},a] \mbox{, } t\ne\tau(y(t)) \mbox{,} \\
    y(t)=\bar{y}(t)\mbox{,} & \mbox{for } t\in[0,t^{\bar{y},r}] \mbox{,}\\
y(t^+)=y(t)+I({y}(t))\mbox{,} & \mbox{for } t=\tau(y(t)) \mbox{.} \\
\end{array}
\right.\label{eq:mimpulDlaselekcjigSKwym}\end{equation}
where $t^{\bar{y},r}:=t_{\bar{y}}-2(r-\frac{1}{2})t_{\bar{y}}\mbox{.}$
There exists exactly one solution of this problem, denoted by $y^{1}_{\bar{y},r},$ which also belongs to $\mbox{cl}\,S_n.$\\
Finally we consider the following function $h:[0,1]\times{\mbox{cl\,}S_n}\rightarrow \mbox{cl\,}S_n$ given by:
\begin{equation}h(r,\bar{y}):=\left\{
\begin{array}{ll}
y^2_{\bar{y},r}, & \hbox{ $r\in[0,\frac{1}{2}]$},\\
y^1_{\bar{y},r}, & \hbox{ $r\in(\frac{1}{2},1]$}.\\
\end{array}
\right.\end{equation}

\vspace{0,5cm}
Now, we will show that the function h is continuous.

\vspace{0,5cm}
Due to the continuous dependence of solutions on initial conditions \cite{Hale} we know that the function $h$ is continuous on $[0,\frac{1}{2})\times{\mbox{cl}\,S_n}$ and left continuous on $\{\frac{1}{2}\}\times\mbox{cl}\,S_n$. 

Let $((r_k,\bar{y}_k))_k$  be a sequence convergent to $(\frac{1}{2}^+,\bar{y})$.

We know that if $t_{\bar{y}}<t_{h(r_k,\bar{y}_k)}$ then $\tau(h(r_k,\bar{y}_k)(t_{\bar{y}}))-t_{\bar{y}}>0$ and we have
$$ t_{\bar{y}}-\tau(h(r_k,\bar{y}_k)(t_{\bar{y}}))= \int_{t_{\bar{y}}}^{t_{h(r_k,\bar{y}_k)}}(\tau(h(r_k,\bar{y}_k)(\cdot))-\cdot)'(\theta)d\theta < -p'(t_{h(r_k,\bar{y}_k)}-t_{\bar{y}}),$$
so
$$ t_{h(r_k,\bar{y}_k)}<t_{\bar{y}}+(\tau(h(r_k,\bar{y}_k)(t_{\bar{y}}))-t_{\bar{y}})/p'\mbox{,}$$
but for $k$ such that $t_{\bar{y}}\leq t_{h(r_k,\bar{y}_k)}$ one sees that
\begin{align*}
\Bigg\|\bar{y}(t^{\bar{y}_k,r_k})&+\int_{t^{\bar{y}_k,r_k}}^{t_{\bar{y}}}\bar{\phi}_n(s)ds- \left(y^1_{\bar{y}_k,r_k}(t^{\bar{y}_k,r_k})
+\int_{t^{\bar{y}_k,r_k}}^{t_{\bar{y}}}g_n(s,y^1_{\bar{y}_k,r_k}(s))ds\right)\Bigg\|\\
&=\left\|\int_{t^{\bar{y}_k,r_k}}^{t_{\bar{y}}}\bar{\phi}_n(s)-g_n(s,x^1_{\bar{y}_k,r_k}(s))ds\right\|\\
&\leq\int_{t^{\bar{y}_k,r_k}}^{t_{\bar{y}}}\|\bar{\phi}_n(s)\|+\|g_n(s,y^1_{\bar{y}_k,r_k}(s))\|ds\\
&\leq\int_{t^{\bar{y}_k,r_k}}^{t_{\bar{y}}}2\left(\alpha(s)(1+K)+\frac{1}{n}\right)ds\mbox{.}
\end{align*}
Hence, if $k\to\infty$ we get 
$$ h(r_k,\bar{y}_k)(t_{\bar{y}})\to \bar{y}(t_{\bar{y}})\mbox{,}$$
so
$$ \tau( h(r_k,\bar{y}_k)(t_{\bar{y}}))-t_{\bar{y}}\to 0\mbox{.}$$
For every $k$ we have
$$t^{\bar{y}_k,r_k}<t_{h(r_k,\bar{y}_k)}\leq t_{\bar{y}}+{1{\hskip -2.5 pt}\hbox{l}}_{t_{\bar{y}}<t_{h(r_k,\bar{y}_k)}}(\tau(h(r_k,\bar{y}_k)(t_{\bar{y}}))-t_{\bar{y}})/p'$$
and
$$t^{\bar{y}_k,r_k}\rightarrow t_{\bar{y}}\leftarrow t_{\bar{y}}+{1{\hskip -2.5 pt}\hbox{l}}_{t_{\bar{y}}<t_{h(r_k,\bar{y}_k)}}(\tau(h(r_k,\bar{y}_k)(t_{\bar{y}}))-t_{\bar{y}})/p'\mbox{,}$$
therefore by squeeze theorem
\be\label{eq:t}
t_{h(r_k,\bar{y}_k)}\rightarrow t_{\bar{y}}\mbox{.}
\ee

\vspace{0,5cm}
Let us fix k for a while. By $1{\hskip -2.5 pt}\hbox{l}$ we denote the function:
\begin{equation}\nonumber
1{\hskip -2.5 pt}\hbox{l}_{t>t_0}(y(t))=
\left\{ \begin{array}{ll}
    y(t), & \hbox{for $t>t_0,$} \\
    0, & \hbox{for $t\leq{t_0}.$} \\
   \end{array}
\right.
\end{equation}
We define function $\varrho_{\bar{y}_k,r_k}\! :\! [0,a]\to\R^N$ 

\begin{align*}
\varrho_{\bar{y}_k,r_k}(t):=&y^1_{\bar{y}_k,r_k}(t)- {1{\hskip -2.5 pt}\hbox{l}}_{t>t_{y^1_{\bar{y}_k,r_k}}}(I(y^1_{\bar{y}_k,r_k}(t_{y^1_{\bar{y}_k,r_k}})))\\
-&(y^2_{\bar{y},1/2}(t)-{1{\hskip -2.5 pt}\hbox{l}}_{t>t_{\bar{y}}} (I(y^2_{\bar{y},1/2}(t_{\bar{y}}))),
\end{align*}
which is function of differences between $y^1_{\bar{y}_k,r_k}\mbox{,}$ and $y^2_{\bar{y},1/2}$ with deleted changes caused
by jumps. It is easy to see that $t_{\bar{y}}=t_{y^2_{\bar{y},1/2}}\mbox{.}$
For $t^{\bar{y}_k,r_k}\leq t\leq t_{\bar{y}}$ we get that:
\begin{align*}
\|\varrho_{\bar{y}_k,r_k}(t)\|&\leq \|\bar{y}-\bar{y}_k\|+\int_{t^{\bar{y}_k,r_k}}^t \|g_n(s,{y^1_{\bar{y}_k,r_k}}(s))-\bar{\phi}_n(s)\|ds\\
&\leq\|\bar{y}-\bar{y}_k\|+\int_{t^{\bar{y}_k,r_k}}^{t}\left(\|\bar{\phi}_n(s)\|+\|g_n(s,y^1_{\bar{y}_k,r_k}(s))\|\right)ds\\
&\leq\|\bar{y}-\bar{y}_k\|+\int_{t^{\bar{y}_k,r_k}}^{t}2\left(\alpha(s)(1+\bar{K})+\frac{1}{n}\right)ds=:z_{\bar{y}_k,r_k}(t)
\end{align*}
and for $t> t_{\bar{y}}$ we have
\begin{align*}
\|\varrho_{\bar{y}_k,r_k}(t)\|&\leq z_{\bar{y}_k,r_k}(t_{\bar{y}})+ \int_{t_{\bar{y}}}^{t}\|g_n(s,{y^1_{\bar{y}_k,r_k}}(s))-g_n(s,{y^2_{\bar{y},1/2}}(s))\|ds 
\\
\leq z_{\bar{y}_k,r_k}(t_{\bar{y}})&+\int_{t_{\bar{y}}}^{t} \Lambda(s)\|{y^1_{\bar{y}_k,r_k}}(s)-{y^2_{\bar{y},1/2}}(s)\|ds
\\
=z_{\bar{y}_k,r_k}(t_{\bar{y}})&+\int_{t_{\bar{y}}}^{t}\Lambda(s)\|{\check{y}^1_{\bar{y}_k,r_k}}(s)
+{1{\hskip -2.5 pt}\hbox{l}}_{s>t_{y^1_{\bar{y}_k,r_k}}}I(y^1_{\bar{y}_k,r_k}(t_{y^1_{\bar{y}_k,r_k}}))
-{\check{y}^2_{\bar{y},1/2}}(s)
\\
&-I({y}^2_{\bar{y},1/2}(t_{\bar{y}}))\|ds
\\
\leq z_{\bar{y}_k,r_k}(t_{\bar{y}})&+ \left|\int_{t_{\bar{y}}}^{t_{y^1_{\bar{y}_k,r_k}}}\Lambda(s)\|I({y}^2_{\bar{y},1/2}(t_{\bar{y}})\|ds\right|
\\
&+\int_{\max\{t_{\bar{y}},t_{y^1_{\bar{y}_k,r_k}}\}}^t \Lambda(s)\|I(y^1_{\bar{y}_k,r_k}(t_{y^1_{\bar{y}_k,r_k}}))-I({y}^2_{\bar{y},1/2}(t_{\bar{y}}))\|ds
\\
&+\int_{t_{\bar{y}}}^{t}\Lambda(s)\|{\check{y}^1_{\bar{y}_k,r_k}}(s)-{\check{y}^2_{\bar{y},1/2}}(s)\|ds,
\end{align*}
hence by the Gronwall inequality we obtain
\begin{align*}
\|\varrho_{\bar{y}_k,r_k}(t)\|&\leq \Bigg(z_{\bar{y}_k,r_k}(t_{\bar{y}})+\left|\int_{t_{\bar{y}}}^{t_{y^1_{\bar{y}_k,r_k}}} \Lambda(s)\|I({y}^2_{\bar{y},1/2}(t_{\bar{y}}))\|ds\right|\\
&+\int_{\max\{t_{\bar{y}},t_{y^1_{\bar{y}_k,r_k}}\}}^t \Lambda(s)\|I(y^1_{\bar{y}_k,r_k}(t_{y^1_{\bar{y}_k,r_k}}))- I({y}^2_{\bar{y},1/2}(t_{\bar{y}}))\|ds\Bigg)\\
&*\exp{\int_{t_{\bar{y}}}^{t}\Lambda(s)ds}=:z_{\bar{y}_k,r_k}(t)\mbox{.} 
\end{align*}
By previous convergences and continuity of I we obtain

\begin{equation}\begin{split}\label{eq:jumps}
\Big\| &I(h(r_k,\bar{y}_k)(t_{h(r_k,\bar{y}_k)})) -I(h(1/2,\bar{y})(t_{h(1/2,\bar{y})}))\Big\|\to 0
\end{split}\end{equation}
and
\be\label{eq:supy}
\sup_{t\in[0,a]}\|\varrho_{\bar{y}_k,r_k}(t)\|\leq z_{\bar{y}_k,r_k}(a)\to 0.
\ee

Summing up, by (\ref{eq:supy}), (\ref{eq:t}) and (\ref{eq:jumps}), if $(r_k,\bar{y}_k)$ converges to $(\frac{1}{2}^+,\bar{y}),$ then $y^1_{\bar{y}_k,r_k}$ converges to $y^2_{\bar{y},\frac{1}{2}}$ in norm in the space $CJ_1([0,a])$.

The function h, as continuous on
$[0,1]\times{\mbox{cl}\,S_n},$ is a homotopy. By definition of the function $h$ we have
$h(0,\bar{y})=\bar{y}$ and $h(1,\bar{y})=y^1_{\bar{y},1}$, so
${\mbox{cl}\,S_n}$ is a~contractible set.
 \vspace{0,3cm}

\underline {Step 4.}

We will show that properties needed to theorem \ref{twr:Hyman} are fulfilled.

The sets $\mbox{cl}\,S_n$ are contractible in the power of Step 3.

If $x\in \bigcap_{n\in\N}\; \mbox{cl}\,S_n$, then $x\in \mbox{cl}\,S_n$ for every $n$. Therefore, there exists sequence $(d_n)\subset\R_+$ converges to~$0$ such that $B(x,d_n)$ (in $CJ_1([0,a])$) contains $y_n$ ($y_n\in{S_n}$). Hence $y_n\to x$ in the space $CJ_1([0,a])$. Moreover, we know that subsequence $y_{n_k}$ converges to a solution of problem (\ref{eq:zagadnienieINKLUZJA}), where $m=1$, so $x\in S$. We get
$$S\subset\bigcap_{n\in\N}\, S_n\subset\bigcap_{n\in\N}\,\mbox{cl}\, S_n\subset S\mbox{,}$$
so $S=\bigcap_{n\in\N}\,\mbox{cl}\, S_n$.

We will show that $\sup\{d(z,S)| z\in{S_n}\}\to_{n\to\infty}{0}$. Assume that there exists $\varepsilon>0$ and a sequence $(y_n)$ such that $y_n\in{S_n}$ and $d(y_n,S)\geq\varepsilon$. From the Step 3 we know taht this sequence has subsequence $(y_{n_k})$ such that $y_{n_k}\to{\tilde{y}}\in{S},$
so $d(y_{n_k},S)\to{0}$. It is contrary to the choice of the sequence $(y_n),$ hence $\sup\{d(z,S)|
z\in{S_n}\}\to{0}$. Therefore
$\sup\{d(z,S)| z\in{\mbox{cl}\,{S_n}}\}\to{0}$. 

We get
$S_n\subset S + B(0,p_n)$, where $p_n:=\sup_{z\in S_n}d(z,S)\to 0$ with $n\to\infty$.

Compactness of $S$ implies
$$\beta(\mbox{cl}\, S_n)=\beta(S_n)\leq \beta(S)+p_n=p_n,$$
so $\beta(\mbox{cl}\, S_n)\to 0$.

Summing up, we can use theorem \ref{twr:Hyman}, which implies that the set $S$ is an $R_{\delta}$-set.
\end{proof}

We will use this theorem to prove more general case.

\begin{twr} \label{twr:drugieskonWym} Let the assumptions $(F0),$-$(F3)$ and $(H1)_m,$-$(H3)_m$ hold. Then every
solution of the problem (\ref{eq:zagadnienieINKLUZJA}) for every $j=1,\dots,m$ meets $\Sigma_j$ exactly once and the solution set $S$ of this problem is an $R_\delta$-set in the space $CJ_m([0,a]).$
\end{twr}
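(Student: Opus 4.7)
The plan is to argue by induction on $m$, taking Theorem \ref{twr:dokladniejedenskokINKLUZJA} as the base case $m=1$, and to mirror its four-step architecture at level $m$ rather than attempt a fibration-style reduction (which would be obstructed by the fact that the first jump time is not fixed and varies with the solution). The inductive hypothesis will serve mainly to provide intuition and to justify that the uniform bounds and convergence estimates propagate correctly across jumps; the real work is a step-by-step lift of the single-jump proof.

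For the inductive step I would first reuse verbatim the construction of locally Lipschitz selections $g_n$ and enlarged maps $G_n$ of $F$ from Step 1 of Theorem \ref{twr:dokladniejedenskokINKLUZJA}, and form the solution sets $S_n$ of the $m$-pulse approximate problem driven by $G_n$. The a priori bound is obtained by an iterated application of Gronwall, sub-interval by sub-interval $[t^{j-1}_{y_n}, t^j_{y_n}]$, using the continuity of each $I_j$ to absorb the jump additions into a larger constant $\bar K^{(m)}$. Uniqueness of the $j$-th encounter with $\Sigma_j$ on $(t^{j-1}_{y_n}, a]$ follows by analysing the auxiliary functions $w_n^j(t)=\tau_j(y_n(t))-t$ exactly as in Step 1c; here the full hypothesis $(H2)_m$, with its cross-inequalities $\tau_j(y+I_j(y)) \leq \tau_j(y) < \tau_{j+1}(y+I_j(y))$, is what guarantees that immediately after crossing $\Sigma_j$ the next hypersurface $\Sigma_{j+1}$ is still strictly above $t$, while $(H3)_m$ for all $j$ gives the uniform slope estimate $-p'$. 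The convergence part (Step 2) is then executed on each of the $m+1$ sub-intervals $[0,t^1_*], (t^1_*, t^2_*], \ldots, (t^m_*,a]$ separately via Arzel\`a--Ascoli (Theorem \ref{twr:Arzela-Ascoliklasyczne}) and the convergence theorem (Theorem \ref{twr:convergenceinkluzje}), with the jump-time limits $t^j_*$ extracted iteratively by sequential compactness of $[0,a]$, concatenating the limiting pieces to obtain a solution $\tilde{y}\in S$.

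For the contractibility of $\mbox{cl}\,S_n$ I would generalize the two-phase homotopy of Step 3 to $m+1$ phases: partition $r\in[0,1]$ into sub-intervals $\left[\frac{k-1}{m+1},\frac{k}{m+1}\right]$ and on the $k$-th piece successively drag the starting position of the recomputed trajectory back across the $(m-k+2)$-th jump, each time solving the unique approximate problem driven by $g_n$ with the truncated initial segment of $\bar y$ as data. Continuity at each transition $r=k/(m+1)$ follows from the direct analogue of the $r=1/2^+$ squeeze-theorem calculation in Theorem \ref{twr:dokladniejedenskokINKLUZJA}, the constant $p'$ being replaced by the minimum of the $p'$-constants associated with each $\tau_j$; the Gronwall-type bound on $\varrho_{\bar y_k,r_k}$ is the same, applied sub-interval by sub-interval. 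The intersection identity $S=\bigcap_n \mbox{cl}\,S_n$ and the Hausdorff-measure estimate $\beta(\mbox{cl}\,S_n)\to 0$ of Step 4 go through unchanged, and Theorem \ref{twr:Hyman} concludes. The main obstacle is the Step 3 homotopy: when the dragging point crosses a jump time $t^j_{\bar y}$, one must verify that the reconstructed trajectory continues to meet each of the remaining $\Sigma_i$ (for $i>j$) exactly once and that the jump-time map $\bar y\mapsto t^j_{\bar y}$ stays jointly continuous in $(r,\bar y)$ through the transition; this is precisely what the iterated squeeze-theorem argument with the constants $p'_j$ and the ordering supplied by $(H2)_m$ delivers. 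Once it is in place, the rest of the proof is a notational extension of the single-jump case.
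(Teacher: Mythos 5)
Your proposal follows essentially the same route as the paper: the same $G_n$/$S_n$ approximation with a priori bounds, the same analysis of $w_{n,j}(t)=\tau_j(y_n(t))-t$ under $(H2)_m$--$(H3)_m$ to show each $\Sigma_j$ is met exactly once and in order, interval-by-interval passage to the limit at the jump times, and the multi-phase dragging homotopy plus Theorem \ref{twr:Hyman} --- the paper compresses the last steps into ``proceed with the reasoning of Theorem \ref{twr:dokladniejedenskokINKLUZJA} on each sub-interval,'' which is exactly what your $(m+1)$-phase construction spells out. The only point worth adding explicitly is the paper's short argument that the limiting jump times remain separated (from $\tau_{j+1}(y_*(t^j_*)+I_j(y_*(t^j_*)))>\tau_j(y_*(t^j_*))=t^j_*$ one gets the $\varepsilon_j$-gaps), which your iterative sub-interval extraction tacitly requires.
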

\begin{proof}
We will show that we can divide the interval $[0, a]$ into $m$ disjoint parts and any
of them will have exactly one jump effect. Then we will be able to use the reasoning
of theorem \ref{twr:dokladniejedenskokINKLUZJA} on every such part, which will end the proof. By analogy to Step 1. in
the proof of Theorem \ref{twr:dokladniejedenskokINKLUZJA} we define a~multivalued map $G_n:[0,a]\times\R^N\multimap\R^N$ and consider the following problem 
\begin{equation}\label{eq:zagadnieniezGnvariabletimesk}
\left\{ \begin{array}{ll}
    \dot{y}(t)\in{G_n(t,y(t))}, & \hbox{for $t\in[0,a]$, $t\neq{\tau_j(y(t))},$ $j=1,\dots,m,$} \\
    y(0)=y_0, \\
    y(t^{+})=y(t)+I_j(y{(t)}), & \hbox{for $t={\tau}_j(y(t)),$ $j=1,\dots,m$. }\\
\end{array}
\right.
\end{equation} 

We denote by $t^j_{y_n}$ the time of $j$-th jump for the function \mbox{$y_n:[0,a]\to\R^N$}. If the function $y_n$ has less than $j$ jumps we take $t^j_{y_n}:=a$.
Let $y_n$ be an~arbitrary solution of the problem (\ref{eq:zagadnieniezGnvariabletimesk}) for $0\leq{t}\leq{t^2_{y_n}}\mbox{.}$
By analogy to theorem \ref{twr:dokladniejedenskokINKLUZJA} in Step 1a. we show that there exists a constant $\bar{K}$ such that
$$\sup_{t\in[0,t^2_{y_n}]}\|y_n(t)\|\leq{\bar{K}}\mbox{.}$$
Next we will proceed similary to the proof of theorem \ref{twr:dokladniejedenskokINKLUZJA} (Step 1b), we show that there exists a constant $p'>0$ such that 
$${\tau}'_j(y)\cdot\phi_n-1<-p'$$
for all $j=1,\dots,m,$ and for enough big $n,$
where $\bar{y}$ is fixed function with values in $\mbox{cl}\,B(0,\bar{K})\mbox{,}$ $t$ is such that the assumption $(H3)_m$ is satysfied, $\bar{y}(t)=y,$ $\phi_n\in{G_n(t,\bar{y}(t))}.$ 
We define the function $w_{n,j}:[0,a]\to\R$ in the following way:
$$w_{n,j}(t):=\tau_j(y_n(t))-t,~~~~ j=1,\dots,m\mbox{.}$$
Let us fix a solution $y_n$ of problem (\ref{eq:zagadnieniezGnvariabletimesk}).

Now we will prove by induction that the $j$-th time of
jump is zero of~the~function $w_{n,j}$.\\
{\underline {Basis.}}  By the assumption $(H2)_m$ we have:\\
a) $w_{n,j}(0)=\tau_j(y_0)>0,$\\
b) $w_{n,j}(a)={\tau}_j(y_n(a))-a<0,$\\
c) $w_{n,j}(t)=\tau_j(y_n(t))-t<\tau_{j+1}(y_n(t))-t=w_{n,j+1}(t)$ for all $t\in[0,a]\mbox{.}$

If there are no impulses then $w_{n,j}(t)\neq{0}$ on $[0,a],$ but by the
definition of $w_{n,j}$ for every $j$ the function $w_{n,j}$ is continuous, which contradicts with a) and b). Hence there is at least one jump time.

Let $0<t^1_{y_n}<a$ be the first time of jump for the solution $y_n\mbox{.}$ Then
$$w_{n,j}(t^1_{y_n})=0,\quad w_{n,j}(t)>0,\; \mbox{ dla } t\in[0,t^1_{y_n})\mbox{.}$$
By the assumption $(H2)_m$ we get that:
$$w_{n,j}({t^1_{y_n}}^+)=\tau_j(y_n({t^1_{y_n}}^+))-{t^1_{y_n}}= \tau_j(y_n(t^1_{y_n})+I_j(y_n(t^1_{y_n})))-t^1_{y_n}\leq{0}\mbox{.}$$
For a.e. $t\geq{t^1_{y_n}}$ we have  
$$w'_{n,j}(t)={\tau}'_j(y_n(t))\cdot{y_n'(t)}-1={\tau}_j'(y_n(t))\cdot{\phi}_n(t)-1<-p'<0\mbox{.}$$
Hence $w_{n,j}$ is decreasing function at $[t^1_{y_n},a]$, so $y_n$ for every $j$ meets $\Sigma_j$ exactly once. 
By c) we know that the time $t^1_{y_n}$ of the first jump is zero of the function $w_{n,1}$.\\
{\underline {Inductive step.}}
We assume that the $j$-th time of jump is zero of $w_{n,j},$ $j<m$.\\
Denote by $t_{y_n}^j$ the time for which we have $t=\tau_j(y_n(t))$.
We know that $w_{n,l}({t^j_{y_n}}^+)=\tau_l(y_n(t^j_{y_n}))-{t^j_{y_n}}>\tau_j(y_n(t^j_{y_n}))-{t^j_{y_n}}=0$ for $j>1\mbox{,}$ $l>j$.
We consider interval $J:=(t^j_{y_n},a]$. By analogy to Step 1 in proof of theorem \ref{twr:dokladniejedenskokINKLUZJA} we have $w_{n,i}(t)\neq{0},$ $t\in{J},$ $i\leq{j}$, as long as there is
no jump caused by $w_{n,l}$ for $l>j$.
There have to be at least one jump after $t^j_{y_n}$ and we denote it by~$\tilde{t}$. By c) there are no jumps before the jump caused by $w_{n,j+1}$, so $\tilde{t}=t^{j+1}_{y_n}$.

Since both the basis and the inductive step have been proved, it has been proved
by mathematical induction that there are exactly $m$ jumps one for each $\Sigma_j$, $j=1,\dots,m$.

Next we will proceed similary to the proof of theorem \ref{twr:dokladniejedenskokINKLUZJA}.\\
Let $(t^1_*, t^2_*, \dots, t^m_*)$ be the limit of the sequence $((t^1_{y_n}, t^2_{y_n}, \dots, t^m_{y_n}))$.\\
We can show that on $[0, t^1_*]$ there exists the subsequence of $(y_{n_k})$ such that $I_1(y_{n_{k_l}}(t_{y_{n_{k_l}}}))\to{I_1(y_*(t^1_*))}$ and $\tau_1(y_*(t_*^1))=t^1_*\mbox{,}$ so
$$\tau_2(y_*((t_*^1)^+))=\tau_2(y_*(t_*^1)+I_1(y_*(t_*^1)))>\tau_1(y_*(t_*^1))=t_*^1\mbox{.}$$
By the assumptions $(H1)_m$ and $(H2)_m$ there exists $\varepsilon_1>0$ such that there is only
one jump time of $y_*$ on $[0,t^1_*+2\varepsilon_1]$ and we get that for all sufficiently big $l$ there is only
one jump time of $y_{n_{k_l}}$ on $[0,t^1_*+\varepsilon_1]$. By analogy we find subsequence and $\varepsilon_2>0$ such that there is exactly one jump on $[t_*^1+\varepsilon_1, t_*^2+\varepsilon_2]$, and so on. On every such interval we
proceed with reasoning from theorem \ref{twr:dokladniejedenskokINKLUZJA}.
\end{proof}

The following example shows the case of inclusion with exactly one jump, but can easly be rearranged to a multijump case.

\bex
There are two trust funds with interest rates (dependent on time and amount of money) $\alpha(t,y_1)\in A(t,y_1)$ and $\beta(t,y_2)\in B(t,y_2)$ respectively where $A$ and $B$ are (multivalued) investment plans.
It is available to transfer money once (in or out) without loosing interest.

We decided to start both trust funds with the same amount of money and transfer money from worse deposit to better one after clarifying which one is better.
The amount of money that we wish to transfer would be proportional to difference in incomes.

 This situation can be represented in the form of following differential inclusion:
$$
\left\{ \begin{array}{ll}
    \dot{(y_1,y_2)}(t)\in F(t,(y_1,y_2)(t)),&\hbox{for $t\in[0,1]$, $t\neq{\tau((y_1,y_2)(t))}$,} \\
    (y_1,y_2)(0)=(y_0,y_0), \\
    (y_1,y_2)(t^{+})=(y_1,y_2)(t)+I((y_1,y_2)(t)),&\hbox{for $t={\tau}((y_1,y_2)(t))$}\\
\end{array}
\right.
$$

with
$$ F(t,y_1,y_2)=A(t,y_1)\times B(t,y_2),$$
where $A$ and $B$ fulfill assumptions $(F1)$ and $(F2)$ (for example continuous) and have interval values with $A(t,y), \; B(t,y) \in [-6y,6y]$
$$I(y_1,y_2)=\left\{
\begin{array}{ll}
(-y_1,y_1), &  0\leq (1+\frac{1}{\rho}) y_1<y_2, \\
(y_2,-y_2), &  0\leq (1+\frac{1}{\rho}) y_2<y_1, \\
(\rho (y_1-y_2), \rho (y_2-y_1)), &  0\leq y_1\leq (1+\frac{1}{\rho}) y_2\leq (1+\frac{1}{\rho})^2 y_1, \\
(0,0), & y_1<0 \mbox{ or } y_2<0
\end{array}
\right.$$
and
$$\tau(y_1,y_2):=\mbox{arccot}(y_1+y_2)/\pi.$$
\vspace{0.1cm}

For any $\alpha\in A(t,y)$ we have that
$$\alpha \geq -3(1/2+2y^2)=(\pi(p-1))(1/2+2y^2)$$
where $1>p=1-3/\pi>0$. We have alnalogous inequality for $\beta$ therefore we get
\begin{align*}
\sup_{\varphi\in{F(t,y_1,y_2)}}{\tau}_j'(y_1,y_2)\cdot \varphi &= \sup_{\alpha\in{A(t,y_1)}}\sup_{\beta\in{B(t,y_2)}} \frac{\alpha+\beta}{-\pi(1+(y_1+y_2)^2)}\\
&\leq (1-p)\frac{1+2y_1^2+2y_2^2}{1+(y_1+y_2)^2}\leq{1-p}<1
\end{align*}

All assumptions of Theorem \ref{twr:dokladniejedenskokINKLUZJA} are satisfied, therefore the solution set of~this~problem is an~$R_\delta$-set.
\eex


\end{document}